\newcommand{\KMS}{K_S(\mathbf{M})}
\newcommand{\KMSs}{K_S^*(\mathbf{M})}
\newcommand{\KKMS}{\mathbf{K}_S(\mathbf{M})}
\newcommand{\KKMSs}{\mathbf{K}_S^*(\mathbf{M})}
\theoremstyle{plain}
\newtheorem{corollary}{Corollary}
\newtheorem{definition}{Definition}
\newtheorem{lemma}{Lemma}
\newtheorem{proposition}{Proposition}
\newtheorem{remark}{Remark}
\newtheorem{theorem}{Theorem}
\theoremstyle{definition}
\newtheorem{example}{Example}
\newcommand{\NN}{\mathbb{N}}
\subjclass[2010]{Primary 26E60, 39B22, Secondary 39B12}
\keywords{invariant means, complementary averages of means, characterizations, beta-type means}
\begin{document}
\title[Invariant means, complementary averages of means...]{ Invariant
means, complementary averages of means, and a characterization of the
beta-type means }
\author{Janusz Matkowski}
\address{Institute of Mathematics, University of Zielona G\'{o}ra, Szafrana
4a, PL-65-516 Zielona G\'{o}ra, Poland}
\email{j.matkowski@wmie.uz.zgora.pl}
\author{Pawe\l\ Pasteczka}
\address{Institute of Mathematics, Pedagogical University of Krakow, Podchor%
\k{a}\.{z}ych 2, PL-30-084 Krak\'{o}w, Poland}
\email{pawel.pasteczka@up.krakow.pl}

\begin{abstract}
We prove that whenever the selfmapping $(M_1,\dots,M_p)\colon I^p \to I^p$, (%
$p \in \mathbb{N}$ and $M_i$-s are $p$-variable means on the interval $I$)
is invariant with respect to some continuous and strictly monotone mean $K
\colon I^p \to I$ then for every nonempty subset $S \subseteq\{1,\dots,p\}$
there exists a uniquely determined mean $K_S \colon I^p \to I$ such that the
mean-type mapping $(N_1,\dots,N_p) \colon I^p \to I^p$ is $K$-invariant,
where $N_i:=K_S$ for $i \in S$ and $N_i:=M_i$ otherwise. Moreover 
\begin{equation*}
\min(M_i\colon i \in S)\le K_S\le \max(M_i\colon i \in S).
\end{equation*}

Later we use this result to: (1) construct a broad family of $K$-invariant
mean-type mappings, (2) solve functional equations of invariant-type, and
(3) characterize Beta-type means.
\end{abstract}

\maketitle

\section{Introduction}

We show that every symmetric and increasing $p$-variable mean $K$, that is
invariant with respect to a given mean-type mapping $\mathbf{M}=\left(
M_{1},...,M_{p}\right) $, generate a unique finite family of mean-type
mappings, the coordinates of which are referred to as the complementary $K$%
-averages of a respective subfamily of the coordinates means of $\mathbf{M}$%
. The basic symmetric mean $K$ remains invariant with respect to each member
of this family, which, in general does not include any iterates of $\mathbf{M%
}$. However, according to the invariance principle (\cite{JM2013}, see also, 
\cite{JM2009} and \cite{M-P}) the iterates of the each mean-type mapping
from this family, converge on compact subsets, to the mean-type map $\mathbf{%
K}=\left( K,...,K\right) $, that is important in effective solving some
functional equations (section~\ref{sec:ASFE}).

In a recent paper \cite{HimMat2018}, a purely structural property of Beta
Euler function, gave rise to introduce a family of new means $\mathcal{B}%
_{p} $, called beta-type means. \ In this note (section \ref{sec:Beta}) we
observe that the beta-type mean can be characterized via invariance identity
involving the classical geometric and arithmetic mean. 

\section{Preliminaries}

In the whole paper $I\subset \mathbb{R}$ stands for an iterval, $p\in 
\mathbb{N},$ $p>1$ is fixed, and $\mathbb{N}_{p}:=\{1,\dots ,p\}$.

A function $M:I^{p}\rightarrow I$ is called a \textit{mean} in $I$ if%
\begin{equation*}
\min \left( x_{1},...,x_{p}\right) \leq M\left( x_{1},...,x_{p}\right) \leq
\max \left( x_{1},...,x_{p}\right) \text{, \ \ \ \ \ }x_{1},...,x_{p}\in I\,%
\text{, }
\end{equation*}%
or, briefly, if%
\begin{equation*}
\min \mathbf{x}\leq M\left( \mathbf{x}\right) \leq \max \mathbf{x}\text{, \
\ \ \ \ }\mathbf{x=}\left( x_{1},...,x_{p}\right) \in I^{p}\text{.}\,\text{ }
\end{equation*}%
The mean $M$ is called \textit{strict}, if for all nonconstant vectors $%
\mathbf{x}$ these inequalities are sharp; and \textit{symmetric}, if $%
M\left( x_{\sigma \left( 1\right) },...,x_{\sigma \left( p\right) }\right)
=M\left( x_{1},...,x_{p}\right) $ for all $x_{1},...,x_{p}\in I$ and all
permutations $\sigma $ of the set $\mathbb{N}_{p}$. Mean $M$ is \emph{%
monotone} if it is increasing in each of its variables. It is important to
emphasize that every strictly monotone mean is strict.

A mapping $\mathbf{M}\colon I^{p}\rightarrow I^{p}$ is referred to as \emph{%
mean-type} if there exists some means $M_{i}\colon I^{p}\rightarrow I$, $%
i\in \mathbb{N}_p$, such that $\mathbf{M}=(M_{1},\dots ,M_{p})$. For a
mean-type mapping $\mathbf{M} \colon I^p \to I^p$ we denote a projection
onto the $i$-th coordinate by $[\mathbf{M}]_i \colon I^p \to I$. In this
case we obviously have $[\mathbf{M}]_i=M_i$ for all $i \in \mathbb{N}_p$.

We say that a function $K\colon I^{p}\rightarrow \mathbb{R}$ is invariant
with respect to $\mathbf{M}$ (briefly $\mathbf{M}$-invariant), if $K\circ 
\mathbf{M}=K$.

\begin{theorem}[Invariance Principle]
If $\mathbf{M}\colon I^{p}\rightarrow I^{p}$, $\mathbf{M}=(M_{1},\dots
,M_{p})$ is a continuous mean-type mapping such that 
\begin{equation*}
\max \mathbf{M}\left( \mathbf{x}\right) -\min \mathbf{M}\left( \mathbf{x}%
\right) <\max \left( \mathbf{x}\right) -\min \left( \mathbf{x}\right) \text{%
, \ \ \ }\mathbf{x\in }I^{p}\backslash \Delta \left( I^{p}\right) ,
\end{equation*}%
where $\Delta \left( I^{p}\right) :=\left\{ \text{\ }\mathbf{x=}\left(
x_{1},...,x_{p}\right) \in I^{p}:x_{1}=...=x_{p}\right\} $ then there is a
unique $\mathbf{M}$-invariant mean $K:I^{p}\rightarrow I$ and the sequence
of iterates $\left( \mathbf{M}^{n}\right) _{n\in \mathbb{N}}$ of the
mean-type mapping $\mathbf{M}$ converges to $\mathbf{K}:=\left( K,\dots
,K\right) $ pointwise on $I^{p}$.
\end{theorem}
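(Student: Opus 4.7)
The plan is to control the oscillation of the iterates by tracking the minimum and maximum coordinates, extract a candidate invariant mean as their common limit, and then verify invariance and uniqueness.

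First I would fix $\mathbf{x} \in I^p$ and define the sequences
\[
\alpha_n(\mathbf{x}):=\min \mathbf{M}^n(\mathbf{x}), \qquad \beta_n(\mathbf{x}):=\max \mathbf{M}^n(\mathbf{x}).
\]
Since every $M_i$ is a mean, each coordinate of $\mathbf{M}^{n+1}(\mathbf{x})=\mathbf{M}(\mathbf{M}^n(\mathbf{x}))$ lies between $\alpha_n(\mathbf{x})$ and $\beta_n(\mathbf{x})$, so $(\alpha_n)$ is nondecreasing, $(\beta_n)$ is nonincreasing, and both are bounded within $[\min \mathbf{x}, \max \mathbf{x}]$. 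Hence they converge to limits $\alpha_\infty(\mathbf{x}) \le \beta_\infty(\mathbf{x})$.

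The key step is to show $\alpha_\infty(\mathbf{x}) = \beta_\infty(\mathbf{x})$. All iterates lie in the compact set $[\min \mathbf{x},\max \mathbf{x}]^p$, so there is a subsequence $\mathbf{M}^{n_k}(\mathbf{x})\to \mathbf{y}$ for some $\mathbf{y}$ with $\min \mathbf{y}=\alpha_\infty(\mathbf{x})$ and $\max \mathbf{y}=\beta_\infty(\mathbf{x})$. By continuity of $\mathbf{M}$, $\mathbf{M}^{n_k+1}(\mathbf{x})\to \mathbf{M}(\mathbf{y})$, so $\min \mathbf{M}(\mathbf{y})=\alpha_\infty(\mathbf{x})$ and $\max \mathbf{M}(\mathbf{y})=\beta_\infty(\mathbf{x})$. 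If $\alpha_\infty(\mathbf{x})<\beta_\infty(\mathbf{x})$, then $\mathbf{y}\notin \Delta(I^p)$ and the oscillation hypothesis yields
\[
\beta_\infty(\mathbf{x})-\alpha_\infty(\mathbf{x})=\max \mathbf{M}(\mathbf{y})-\min \mathbf{M}(\mathbf{y})<\max \mathbf{y}-\min \mathbf{y}=\beta_\infty(\mathbf{x})-\alpha_\infty(\mathbf{x}),
\]
a contradiction. So we may define $K(\mathbf{x}):=\alpha_\infty(\mathbf{x})=\beta_\infty(\mathbf{x})$. The trivial sandwich $\min \mathbf{x}\le \alpha_n(\mathbf{x})\le K(\mathbf{x})\le \beta_n(\mathbf{x})\le \max \mathbf{x}$ shows $K$ is a mean, and squeezing the coordinates of $\mathbf{M}^n(\mathbf{x})$ between $\alpha_n(\mathbf{x})$ and $\beta_n(\mathbf{x})$ yields pointwise convergence $\mathbf{M}^n(\mathbf{x})\to (K(\mathbf{x}),\dots,K(\mathbf{x}))$.

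Invariance follows from $\mathbf{M}^{n+1}(\mathbf{x})=\mathbf{M}^n(\mathbf{M}(\mathbf{x}))$: letting $n\to\infty$ in both representations of this vector gives $K(\mathbf{x})=K(\mathbf{M}(\mathbf{x}))$. For uniqueness, if $L\colon I^p\to I$ is any $\mathbf{M}$-invariant mean, then for every $n$,
\[
\alpha_n(\mathbf{x})\le L(\mathbf{M}^n(\mathbf{x}))=L(\mathbf{x})\le \beta_n(\mathbf{x}),
\]
and passing to the limit yields $L(\mathbf{x})=K(\mathbf{x})$; note this step uses only the mean property of $L$, not its continuity. The main obstacle is the dichotomy step establishing $\alpha_\infty=\beta_\infty$: it is essential that a limit point of the iterates be handled through continuity of $\mathbf{M}$ combined with the strict contraction of oscillation off the diagonal, since the oscillation decrease is not quantified uniformly.
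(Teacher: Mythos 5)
Your proof is correct and complete: the monotone sandwich $\alpha_n \uparrow$, $\beta_n \downarrow$, the subsequential limit point argument combining compactness of $[\min\mathbf{x},\max\mathbf{x}]^p$ with continuity of $\mathbf{M}$ and the strict off-diagonal oscillation decrease, and the limit arguments for invariance and uniqueness are all sound. The paper itself does not prove this theorem (it is quoted from the cited references), but your argument is essentially the standard one given there, so there is nothing further to compare.
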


\section{A family of complementary means}

\bigskip Recall the following

\begin{remark}
(\cite{JM1999})Assume that $K:I^{2}\rightarrow I$ is a symmetric mean which
is continuous and monotone.

Then

(i) for an arbitrary mean $M_{1}:I^{2}\rightarrow I$ there is a unique mean $%
M_{2}:I^{2}\rightarrow I$ such that $K$ is $(M_{1},M_{2})$-invariant, 
\begin{equation*}
K\circ (M_{1},M_{2})=K\text{;}
\end{equation*}%
$\mathcal{K}_{M_{1}}:=M_{2},$ is referred to as a $K$-\textit{complementary
mean for} $M_{1};$ and we have 
\begin{equation*}
\mathcal{K}_{M_{1}}=M_{2}\Longleftrightarrow \mathcal{K}_{M_{2}}=M_{1};
\end{equation*}%
(ii) \ if $M_{1},M_{2}:I^{2}\rightarrow I$ are means such that $K$ is $%
(M_{1},M_{2})$-invariant, then there exists a unique mean $%
M:I^{2}\rightarrow I$ such that 
\begin{equation*}
\min \left( M_{1},M_{2}\right) \leq M\leq \max \left( M_{1},M_{2}\right)
\end{equation*}%
and%
\begin{equation*}
K\circ (M,M)=K\text{;}
\end{equation*}%
moreover 
\begin{equation*}
M=K.
\end{equation*}
\end{remark}

%

In this case $p\geq 3$ the counterpart of part (i) of Remark 1 is false
which shows the following

\begin{example}
Let $I=\mathbb{R}$, and $K=A$ where $A\left( x_{1},x_{2},x_{3}\right) =\frac{%
x_{1}+x_{2}+x_{3}}{3}$. The functions $M_{1}\left( x_{1},x_{2},x_{3}\right) =%
\frac{x_{1}+x_{2}}{2},$ $M_{2}\left( x_{1},x_{2},x_{3}\right) =x_{2}$ are
means in $\mathbb{R}$. But it is easy to see that there is no mean $M_{3}$
such that 
\begin{equation*}
A\circ \left( M_{1},M_{2},M_{3}\right) =A\text{,}
\end{equation*}%
but a partial counterpart of part (ii) holds true.
\end{example}

\bigskip

\begin{figure}[!ht]
\center
\tikzstyle{universal}=[circle,draw=blue!50,fill=blue!20,thick,
inner sep=0pt,minimum size=6mm]
\tikzstyle{Sdependence}=[rectangle,draw=black!50,fill=black!5,thick, inner sep=2pt,minimum size=4mm] 
\tikzstyle{comment}=[draw=black,opacity=1,fill=white,thick,style=opaque,inner sep=2pt,minimum size=4mm]
\begin{tikzpicture}[auto]
\node at (0,4) [universal] (M) {$\mathbf{M}$};
\node at (2,2) [universal] (K) {$K$};
\node at (-2,2) [Sdependence] (MS*) {$\{M_i\}_{i \in \NN_p\setminus S}$};
\node at (0,0) [Sdependence] (KS) {$\KMS$};
\node at (2,-2) [Sdependence] (KS*) {$\KMSs$};
\draw [thick, dotted, ->] (M) -- (K);
\draw [thick, ->] (M) -- (MS*);
\draw [thick, ->] (MS*) to [bend left] (KS);
\draw [thick, ->] (K) to [bend right]  (KS);
\draw [thick, ->] (K) -- (KS*);
\draw [thick, ->] (KS)  to [bend left] (KS*);
\node at (1,3) [comment] {$\mathbf{M}$-invariance};
\node at (0,0.8) [comment] {$\KKMS$-invariance};
\node at (1.2,-1) [comment] {$\KKMSs$-invariance};
\end{tikzpicture}
\caption{\label{fig} Map of dependencies between means. Rectangle vertexes are dependent on $S$. Dotted line means that there could exists more $\mathbf{M}$-invariant mean satisfying the conditions of Theorem~\ref{thm:KS}.}
\end{figure}
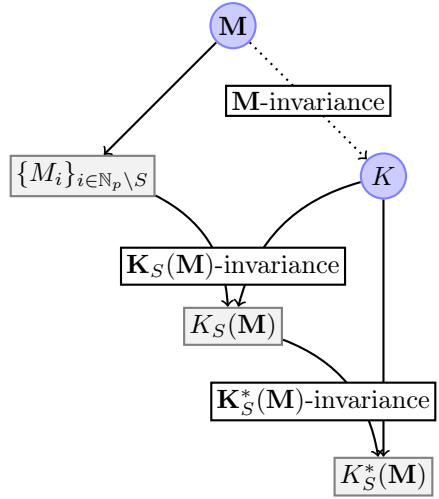

\bigskip

The main result of this section reads as follows.

\begin{theorem}
\label{thm:KS} Let $M_{1},\dots ,M_{p}\colon I^{p}\rightarrow I$ of means.
Assume that $K\colon I^{p}\rightarrow I$ is a continuous and monotone mean
which is invariant with respect to the mean type mapping $\mathbf{M}%
:=(M_{1},\dots ,M_{p})$.

Then for every nonempty subset $S\subseteq \mathbb{N}_{p}$ there exists a
unique mean $K_{S}(\mathbf{M})\colon I^{p}\rightarrow I$ such that $K$ in $%
\mathbf{K}_{S}(\mathbf{M})$-invariant, where $\mathbf{K}_{S}(\mathbf{M}%
)\colon I^{p}\rightarrow I^{p}$ is given by 
\begin{equation}
\lbrack \mathbf{K}_{S}(\mathbf{M})]_{i}:=%
\begin{cases}
K_{S}(\mathbf{M}) & \text{ for }i\in S, \\ 
M_{i} & \text{ for }i\in \mathbb{N}_{p}\setminus S.%
\end{cases}
\tag{1}
\end{equation}%
Moreover $\min (M_{i}\colon i\in S)\leq K_{S}(\mathbf{M})\leq \max
(M_{i}\colon i\in S)$.
\end{theorem}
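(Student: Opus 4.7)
\smallskip
\noindent\textbf{Plan of proof.} The approach is to fix $\mathbf{x}\in I^p$ and read the defining invariance equation $K(\mathbf{K}_S(\mathbf{M})(\mathbf{x}))=K(\mathbf{x})$ as a single-variable equation in the common value $t$ that is supposed to be taken by the coordinates of $\mathbf{K}_S(\mathbf{M})$ indexed by $S$. Concretely, for fixed $\mathbf{x}\in I^p$ I would consider
\begin{equation*}
\varphi_{\mathbf{x}}(t)\;:=\;K\bigl(w_1(t),\dots ,w_p(t)\bigr),\qquad t\in I,
\end{equation*}
where $w_i(t)=t$ for $i\in S$ and $w_i(t)=M_i(\mathbf{x})$ for $i\in \mathbb{N}_p\setminus S$. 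Because $S\neq\emptyset$ and $K$ is continuous and (strictly) monotone, $\varphi_{\mathbf{x}}$ is continuous and strictly increasing on $I$; this is what will give both existence (via the intermediate value theorem) and uniqueness of the value to be assigned to $K_S(\mathbf{M})(\mathbf{x})$.

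Next I would sandwich $K(\mathbf{x})$ between two values of $\varphi_{\mathbf{x}}$. Set $m:=\min(M_i(\mathbf{x}):i\in S)$ and $M:=\max(M_i(\mathbf{x}):i\in S)$. Replacing the coordinates of $(M_1(\mathbf{x}),\dots,M_p(\mathbf{x}))$ indexed by $S$ by $m$ (resp.\ by $M$) yields a componentwise smaller (resp.\ larger) vector, so monotonicity of $K$ gives
\begin{equation*}
\varphi_{\mathbf{x}}(m)\;\leq\;K\bigl(M_1(\mathbf{x}),\dots,M_p(\mathbf{x})\bigr)\;\leq\;\varphi_{\mathbf{x}}(M).
\end{equation*}
The middle term equals $K(\mathbf{x})$ by $\mathbf{M}$-invariance of $K$. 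Thus by continuity of $\varphi_{\mathbf{x}}$ there is a unique $t^{*}\in[m,M]$ with $\varphi_{\mathbf{x}}(t^{*})=K(\mathbf{x})$, and I would define $K_S(\mathbf{M})(\mathbf{x}):=t^{*}$.

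It then remains to verify the stated properties. The inclusion $t^{*}\in[m,M]$ is exactly the displayed bound $\min(M_i:i\in S)\leq K_S(\mathbf{M})\leq\max(M_i:i\in S)$, and combined with the mean property of each $M_i$ it shows that $K_S(\mathbf{M})$ is itself a mean on $I^{p}$. The identity $\varphi_{\mathbf{x}}(t^{*})=K(\mathbf{x})$ is, by construction, the claimed $\mathbf{K}_S(\mathbf{M})$-invariance of $K$. Finally, uniqueness is immediate: any candidate mean $K'_S$ realizing $K\circ\mathbf{K}_S(\mathbf{M})=K$ must satisfy $\varphi_{\mathbf{x}}(K'_S(\mathbf{x}))=K(\mathbf{x})$ for every $\mathbf{x}$, and strict monotonicity of $\varphi_{\mathbf{x}}$ forces $K'_S(\mathbf{x})=t^{*}$.

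I do not expect serious obstacles; the only subtlety is that \emph{strict} monotonicity of $K$ (not merely weak monotonicity) is what makes $\varphi_{\mathbf{x}}$ injective and hence both yields uniqueness and allows the intermediate value argument to pick out a single $t^{*}$. This matches the hypothesis used in the abstract, so I would either interpret ``monotone'' in the theorem as ``strictly monotone'' or, equivalently, invoke the remark from the Preliminaries that strictly monotone means are strict. No continuity or regularity of $K_S(\mathbf{M})$ itself is asserted, so nothing further needs to be checked.
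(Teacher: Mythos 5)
Your proposal is correct and follows essentially the same route as the paper: the authors also fix $\mathbf{x}$, form the one-variable function $f(\alpha)=K\circ T(\alpha)$ (your $\varphi_{\mathbf{x}}$), sandwich $K(\mathbf{x})$ between $f(M_\wedge(\mathbf{x}))$ and $f(M_\vee(\mathbf{x}))$ using the $\mathbf{M}$-invariance, and extract the unique root by continuity and strict monotonicity. Your observation that ``monotone'' must be read as strictly increasing in each variable is exactly the reading the paper uses in its own proof.
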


\begin{proof}
In the case $S=\mathbb{N}_{p}$ the $\mathbf{K}_S(\mathbf{M})$-invariance of $%
K$ implies $K_S(\mathbf{M})=K$ and the statement is obvious. From now on we
assume that $S\neq \mathbb{N}_{p}$.

Denote briefly $M_{\vee }:=\max \{M_{i}:i\in S\}$ and $M_{\wedge }:=\min
\{M_{i}:i\in S\}$. Fix $\mathbf{x}\in I^{n}$ arbitrarily. Define a function $%
T\colon I\rightarrow I^{p}$ by 
\begin{equation*}
\big[T(\alpha )\big]_{i}:=%
\begin{cases}
M_{i}(\mathbf{x}) & \qquad i\in \mathbb{N}_{p}\setminus S, \\ 
\alpha & \qquad i\in S.%
\end{cases}%
\end{equation*}%
and $f\colon I\rightarrow I$ by $f(\alpha ):=K\circ T(\alpha )$. Then, as $K$
is continuous and strictly increasing, so is $f$. Therefore in view of the
equality $K\circ \mathbf{M}(\mathbf{x})=K(\mathbf{x})$ we obtain $f(M_{\vee
}(\mathbf{x}))\geq K(\mathbf{x})$ and $f(M_{\wedge }(\mathbf{x}))\leq K(%
\mathbf{x})$. Thus there exists unique number $\alpha _{\mathbf{x}}\in
\lbrack M_{\wedge }(\mathbf{x}),M_{\vee }(\mathbf{x})]$ such that $f(\alpha
_{\mathbf{x}})=K(\mathbf{x})$. Now, as $\mathbf{x}\in I^{n}$ was arbitrary
we define $K_S(\mathbf{M})(\mathbf{x}):=\alpha _{\mathbf{x}}$.

Then we have 
\begin{equation*}
K(\mathbf{x})=f(\alpha_{\mathbf{x}})=f(K_S(\mathbf{M})(\mathbf{x}))=\big(%
K\circ T\circ \big(K_S(\mathbf{M})\big)\big)(\mathbf{x})=\big(K\circ \big(%
\mathbf{K}_S(\mathbf{M})\big)\big)(\mathbf{x}),
\end{equation*}%
which shows that $K$ is $\mathbf{K}_S(\mathbf{M})$-invariant.

Now we need to show that $K_S(\mathbf{M})$ is uniquely determined. Assume
that $K$ is $\mathbf{K}_S(\mathbf{M})$-invariant and $K_S(\mathbf{M})(%
\mathbf{x})\neq \alpha _{\mathbf{x}}$ for some $\mathbf{x}\in I^{p}$. Then,
as $f$ is a monomorphism we obtain 
\begin{equation*}
K\circ \big(\mathbf{K}_S(\mathbf{M})\big)(\mathbf{x})=\big(K\circ T\circ %
\big(K_S(\mathbf{M})\big)\big)(\mathbf{x})\neq f(\alpha _{\mathbf{x}})=K(%
\mathbf{x})
\end{equation*}%
contradicting the $\mathbf{K}_S(\mathbf{M})$-invariance.
\end{proof}

%
%
%


The intuition beyond this theorem is the following. Once we have a
continuous and monotone mean $K$ such that $\mathbf{M}$ is $K$-invariant
mean we can unite a subfamily $(M_{s})_{s\in S}$ into a single mean (denoted
by $K_{S}(\mathbf{M})$) to preserve the $K$-invariance. 
In view of Theorem 1, such a mean is unique. In this connection we propose
the following

\bigskip

\begin{definition}
Let $K\colon I^{p}\rightarrow I$ be a continuous and monotone mean which is
invariant with respect to the mean type mapping $\mathbf{M}:=(M_{1},\dots
,M_{p})$.

For each set $S\subset \mathbb{N}_{p}:$

(i) the mean $K_S(\mathbf{M})$ is called a $K$-complementary averaging of
the means $\{M_{i}:i\in S\}$ with respect to the invariant mean-type mapping 
$\mathbf{M}=(M_{1},\dots ,M_{p})$;

(ii) the mean-type mapping $\mathbf{K}_S(\mathbf{M})$ given by (1) is called
a $K$-complementary averaging of the mean-type mapping $\mathbf{M}%
=(M_{1},\dots ,M_{p})$ with respect to the means $\{M_{i}:i\in S\}$, and the
set 
\begin{equation*}
\mathfrak{K}\left( K,\mathbf{M}\right) :=\left\{ \mathbf{K}_S(\mathbf{M}%
):S\subseteq \mathbb{N}_{p}, S \ne \emptyset\right\}
\end{equation*}%
is called the family of all $K$-complementary averaging of the mean-type
mapping $\mathbf{M}=(M_{1},\dots ,M_{p}).$
\end{definition}

We can now reapply this result to the complementary of the establish a $K$%
-complementary of $\mathbf{K}_S(\mathbf{M})$ for the set $\mathbb{N}%
_{p}\setminus S $. More precisely we obtain

\begin{corollary}
\label{cor:KS} Under the assumptions of Theorem~\ref{thm:KS} there exists
unique mean $K_S^*(\mathbf{M}) \colon I^p \to I$ such that $K$ is $\mathbf{K}%
_S^*(\mathbf{M})$-invariant, where $\mathbf{K}_S^*(\mathbf{M})\colon I^p \to
I^p$ is given by 
\begin{equation*}
[\mathbf{K}_S^*(\mathbf{M})]_i:=%
\begin{cases}
K_S(\mathbf{M}) & \text{ for }i \in S, \\ 
K_S^*(\mathbf{M}) & \text{ for }i \in \mathbb{N}_p \setminus S.%
\end{cases}%
\end{equation*}
Moreover $\min(M_i \colon i \in \mathbb{N}_p \setminus S) \le K_S^*(\mathbf{M%
}) \le \max(M_i \colon i \in \mathbb{N}_p \setminus S)$.
\end{corollary}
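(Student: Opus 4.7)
The plan is to apply Theorem~\ref{thm:KS} a second time, with the mean-type mapping $\mathbf{K}_S(\mathbf{M})$ playing the role of $\mathbf{M}$ and with the complementary index set $T:=\mathbb{N}_p\setminus S$ playing the role of $S$. What makes this legal is precisely the conclusion of Theorem~\ref{thm:KS}: the mean $K$ is continuous and monotone by hypothesis, and is invariant with respect to $\mathbf{K}_S(\mathbf{M})$, so the triple $(K,\mathbf{K}_S(\mathbf{M}),T)$ satisfies all the assumptions of the theorem.

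Applying Theorem~\ref{thm:KS} to this triple produces a uniquely determined mean, which I name $K_S^*(\mathbf{M})$, together with a mean-type mapping $\mathbf{N}\colon I^p\to I^p$ defined by $[\mathbf{N}]_i:=K_S^*(\mathbf{M})$ for $i\in T$ and $[\mathbf{N}]_i:=[\mathbf{K}_S(\mathbf{M})]_i$ for $i\in S$, such that $K$ is $\mathbf{N}$-invariant. By the defining formula (1) for $\mathbf{K}_S(\mathbf{M})$, we have $[\mathbf{K}_S(\mathbf{M})]_i=K_S(\mathbf{M})$ whenever $i\in S$, so $\mathbf{N}$ coincides with the mean-type mapping $\mathbf{K}_S^*(\mathbf{M})$ described in the statement of the corollary. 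Uniqueness of $K_S^*(\mathbf{M})$ transfers directly from the uniqueness clause of Theorem~\ref{thm:KS}.

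For the bound, the same application of Theorem~\ref{thm:KS} yields
\begin{equation*}
\min\bigl([\mathbf{K}_S(\mathbf{M})]_i\colon i\in T\bigr)\le K_S^*(\mathbf{M})\le \max\bigl([\mathbf{K}_S(\mathbf{M})]_i\colon i\in T\bigr),
\end{equation*}
and since $[\mathbf{K}_S(\mathbf{M})]_i=M_i$ for every $i\in T=\mathbb{N}_p\setminus S$, this is exactly the desired inequality $\min(M_i\colon i\in\mathbb{N}_p\setminus S)\le K_S^*(\mathbf{M})\le\max(M_i\colon i\in\mathbb{N}_p\setminus S)$. There is no real obstacle in the argument; the whole proof is a matter of recognising that the output of Theorem~\ref{thm:KS} feeds back into its own hypotheses, and that the two mean-type mappings arising from this bootstrap agree on the coordinates indexed by $S$ by construction.
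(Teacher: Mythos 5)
Your proof is correct and matches the paper's own (implicit) argument: the paper introduces the corollary with the remark that one simply ``reapplies'' Theorem~\ref{thm:KS} to the mean-type mapping $\mathbf{K}_S(\mathbf{M})$ with the index set $\mathbb{N}_p\setminus S$, which is exactly your bootstrap. The only detail worth noting is that this requires $S\neq\mathbb{N}_p$ so that $\mathbb{N}_p\setminus S$ is nonempty, a point neither you nor the paper makes explicit.
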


Let us underline that the value $K_S^*(\mathbf{M})$ does \textbf{not} depend
on $\mathbf{M}$ explicitly. The whole system of dependences is illustrated
on Figure~\ref{fig}.

\bigskip

Observe that, as the mean $K_S^*(\mathbf{M})$ is uniquely determined, we
obtain 
\begin{equation*}
\mathbf{K}_S^*(\mathbf{M})\in \mathfrak{K}\left( K,\mathbf{M}\right) \iff 
\text{ all means }(M_i \colon i \in \mathbb{N}_p \setminus S) \text{ are
equal to each other.}
\end{equation*}



\section{\label{sec:ASFE}\protect\bigskip\ Application in solving functional
equations}

\begin{theorem}
Let $\mathbf{M}=(M_{1},\dots ,M_{p})$ be a mean-type mapping such that $%
M_{1},\dots ,M_{p}:\left( 0,\infty \right) ^{p}\rightarrow \left( 0,\infty
\right) $ are strictly monotonic and homogeneous. Then

\ (i) the sequence $\left( \mathbf{M}^{n}:n\in \mathbb{N}\right) $ of
iterates of $\mathbf{M}$ converge uniformly on compact subsets to a
mean-type mapping $\mathbf{K}=\left( K,...,K\right) $, where $K$ is a unique 
$\mathbf{M}$-invariant mean.

(ii) $K$ is monotone, homogeneous and for every $S\subset \mathbb{N}_{p}$
the iterates of $\mathbf{K}_S(\mathbf{M})$ converge uniformly on compact
subsets to a mean-type mapping $\mathbf{K}=\left( K,...,K\right) $;

(iii) a function $F:\left( 0,\infty \right) ^{p}\rightarrow \mathbb{R}$ is
continuous on the diagonal $\Delta \left( \left( 0,\infty \right)
^{p}\right) :=\left\{ \left( x_{1},\dots ,x_{p}\right) \in I:x_{1}=\dots
=x_{p}\right\} $ and satisfies the functional equation%
\begin{equation}
F\circ \mathbf{M}=F  \tag{2}
\end{equation}%
if and only if $F=\varphi \circ K$, where $\varphi :\left( 0,\infty \right)
\rightarrow \mathbb{R}$ is an arbitrary continuous function of a single
variable;

(iv) a function $F:\left( 0,\infty\right) ^{p}\rightarrow \mathbb{R}$ is
continuous on the diagonal $\Delta \left( \left( 0,\infty \right)
^{p}\right) $ and satisfies the simultaneous system of functional equations%
\begin{equation}
F\circ \mathbf{K}_S(\mathbf{M})=F\text{, \ \ \ \ \ \ }S\subset \mathbb{N}_{p}%
\text{;}  \tag{3}
\end{equation}%
if and only if $F=\varphi \circ K$, where $\varphi :\left( 0,\infty \right)
\rightarrow \mathbb{R}$ is an arbitrary continuous function of a single
variable (so (2) and (3) are equivalent)
\end{theorem}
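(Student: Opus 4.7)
\smallskip

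\textbf{Plan.} My strategy is to obtain (i) from the Invariance Principle quoted in the preliminaries, derive (ii) by transferring structural properties to $K$ and then reapplying the Invariance Principle to $\mathbf{K}_S(\mathbf{M})$, and finally reduce both (iii) and (iv) to a standard iteration-and-limit argument.

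For part (i), I would verify the contraction hypothesis of the Invariance Principle. Since each $M_{i}$ is strictly monotonic it is strict, so for every non-diagonal $\mathbf{x}$ we have $\min \mathbf{x}<M_{i}(\mathbf{x})<\max \mathbf{x}$, which yields $\max \mathbf{M}(\mathbf{x})-\min \mathbf{M}(\mathbf{x})<\max \mathbf{x}-\min \mathbf{x}$. Continuity of each $M_i$ follows from strict monotonicity combined with the mean property. The Invariance Principle then gives a unique $\mathbf{M}$-invariant mean $K$ together with pointwise convergence $\mathbf{M}^{n}\to \mathbf{K}$. To upgrade to uniform convergence on compacts I would use homogeneity: any compact $C\subset (0,\infty)^p$ lies in a set of the form $[a,b]^p$, and by rescaling via homogeneity I may assume $C\subset \{\mathbf{x}:\min\mathbf{x}\ge 1\}\cap\{\mathbf{x}:\max\mathbf{x}\le R\}$, after which monotonicity of each $M_i$ and a sandwich between the iterates at the corner points give equicontinuous-style control producing uniform convergence.

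For part (ii), monotonicity and homogeneity of $K$ transfer from $\mathbf{M}$: if $\mathbf{x}\le \mathbf{y}$ coordinatewise, then $\mathbf{M}^{n}(\mathbf{x})\le \mathbf{M}^{n}(\mathbf{y})$ by induction, so $K(\mathbf{x})\le K(\mathbf{y})$ after passing to the limit; similarly $K(t\mathbf{x})=tK(\mathbf{x})$. To handle the iterates of $\mathbf{K}_{S}(\mathbf{M})$, I would first check that $K_{S}(\mathbf{M})$ is continuous — this follows from its implicit definition $\big(K\circ T\big)(K_{S}(\mathbf{M})(\mathbf{x}))=K(\mathbf{x})$ in the proof of Theorem~\ref{thm:KS}, using the continuity of $K$ and of each $M_{i}$ together with the inverse-function argument for the strictly increasing $f$. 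Next, from $\min(M_{i}:i\in S)\le K_{S}(\mathbf{M})\le \max(M_{i}:i\in S)$ we get $\max \mathbf{K}_{S}(\mathbf{M})(\mathbf{x})-\min \mathbf{K}_{S}(\mathbf{M})(\mathbf{x})\le \max \mathbf{M}(\mathbf{x})-\min \mathbf{M}(\mathbf{x})<\max \mathbf{x}-\min \mathbf{x}$, so the Invariance Principle applies. Since Theorem~\ref{thm:KS} ensures that $K$ is $\mathbf{K}_{S}(\mathbf{M})$-invariant, uniqueness in the Invariance Principle identifies $K$ as the limit mean, so the iterates converge to $(K,\dots,K)$; uniformity on compacts follows from the same homogeneity argument as in (i).

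For (iii), the direction $F=\varphi \circ K\Rightarrow F\circ \mathbf{M}=F$ is immediate from $\mathbf{M}$-invariance of $K$. For the converse, iterating the equation yields $F=F\circ \mathbf{M}^{n}$; letting $n\to\infty$ and using $\mathbf{M}^{n}(\mathbf{x})\to (K(\mathbf{x}),\dots,K(\mathbf{x}))$ together with continuity of $F$ on the diagonal, I obtain $F(\mathbf{x})=F(K(\mathbf{x}),\dots,K(\mathbf{x}))$; defining $\varphi(t):=F(t,\dots,t)$, which is continuous because $F$ is continuous on the diagonal, gives $F=\varphi\circ K$. Part (iv) then follows easily: one direction is immediate by the $\mathbf{K}_{S}(\mathbf{M})$-invariance of $K$, and the other is obtained either by specializing to $S=\mathbb{N}_p$ (for which $\mathbf{K}_{\mathbb{N}_p}(\mathbf{M})=(K,\dots,K)$ as noted at the start of the proof of Theorem~\ref{thm:KS}) or by applying the iterate-and-limit argument to any single $\mathbf{K}_{S}(\mathbf{M})$ using (ii). I expect the main obstacle to be the regularity step in (ii) — ensuring that $K_{S}(\mathbf{M})$ is continuous and homogeneous enough to legitimately reapply the Invariance Principle with uniform convergence — while the functional-equation parts (iii)–(iv) are then essentially formal consequences.
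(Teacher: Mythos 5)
Your overall architecture (Invariance Principle for (i), transfer of structure to $K$ and re-application of the Invariance Principle to $\mathbf{K}_S(\mathbf{M})$ for (ii), iterate-and-limit for (iii)--(iv)) is the same as the paper's, and the verification of the contraction hypothesis via strictness, the sandwich bound $\max \mathbf{K}_S(\mathbf{M})(\mathbf{x})-\min \mathbf{K}_S(\mathbf{M})(\mathbf{x})\le \max \mathbf{M}(\mathbf{x})-\min \mathbf{M}(\mathbf{x})$, and parts (iii)--(iv) are fine. However, there are two genuine gaps, both at exactly the points where the paper does real work. First, your justification of continuity of the $M_i$ is wrong: strict monotonicity together with the mean property does \emph{not} imply continuity. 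For instance, on $(0,\infty)^2$ the function equal to $\frac{x+y}{2}$ for $x<1$ and to $\frac{x+y}{2}+\frac{\max(x,y)-\min(x,y)}{4}$ for $x\ge 1$ is a strictly monotone mean that is discontinuous at every point $(1,y)$ with $y\ne 1$. The paper instead invokes the fact (cited from an earlier paper) that \emph{homogeneity} together with monotonicity implies continuity; you have homogeneity in your hypotheses and must use it here, since the Invariance Principle requires a continuous mean-type mapping.

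Second, and more seriously, your proof of the monotonicity of $K$ only yields weak monotonicity: passing $\mathbf{M}^n(v)\le \mathbf{M}^n(w)$ to the limit gives $K(v)\le K(w)$, because strict inequalities do not survive limits. But Theorem~\ref{thm:KS}, which you invoke to define $K_S(\mathbf{M})$ and hence to make sense of (ii) and (iv), needs $K$ to be strictly increasing in each variable: its proof requires $f=K\circ T$ to be injective so that $\alpha_{\mathbf{x}}$ is unique. The paper closes this gap with a homogeneity trick: if $v\le w$ coordinatewise with $v_{i_0}<w_{i_0}$, then strict monotonicity of the finitely many $M_i$ yields a single $\theta\in(0,1)$ with $M_i(v)\le \theta M_i(w)$ for all $i$, and homogeneity propagates this factor through every iterate, $[\mathbf{M}^n(v)]_i\le \theta\,[\mathbf{M}^n(w)]_i$, so that in the limit $K(v)\le \theta K(w)<K(w)$. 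Without some such quantitative argument, strict monotonicity of $K$ — and with it the legitimacy of the maps $\mathbf{K}_S(\mathbf{M})$ appearing in (ii) and (iv) — is not established.
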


\begin{proof}
The homogeneity and monotonicity of $M_{1},\dots ,M_{p}$ imply their
continuity \cite[Theorem 2]{JM2013}), so the invariance principle implies
(i).


Now we prove that $K$ is monotone. Indeed, take two vectors $v,w \in
(0,\infty)^p$ such that $v_i\le w_i$ for all $i \in \mathbb{N}_p$ and $%
v_{i_0}<w_{i_0}$ for certain $i_0 \in \mathbb{N}_p$. Then, as each $M_i$ is
monotone, there exists a constant $\theta \in (0,1)$ such that $M_i(v) \le
\theta M_i(w)$ for all $i \in \mathbb{N}_p$.

Then for all $n \in\mathbb{N}$ and $i \in \mathbb{N}_p$ we have 
\begin{align*}
\big[\mathbf{M}^{n}(v)\big]_i &=\big[\mathbf{M}^{n-1}(M_1(v),\dots,M_p(v))%
\big]_i \le\big[\mathbf{M}^{n-1}(\theta M_1(w),\dots,\theta M_p(w))\big]_i \\
&=\theta \big[\mathbf{M}^{n-1}(M_1(w),\dots,M_p(w))\big]_i= \theta \big[%
\mathbf{M}^{n}(w)\big]_i.
\end{align*}

In a limit case as $n \to \infty$ in view of the first part of this
statement we obtain $K(v)\le \theta K(w)<K(w)$. Thus $K$ is monotone, which
is (ii).



(iii) \ Assume first that $F:\left( 0,\infty \right) ^{p}\rightarrow \mathbb{%
R}$ that is continuous on the diagonal $\Delta \left( \left( 0,\infty
\right) ^{p}\right) $ and satisfies equation (2). Hence, by induction, 
\begin{equation*}
F=F\circ \mathbf{M}^{n},\text{ \ \ \ \ }n\in \mathbb{N},
\end{equation*}%
By (ii) the sequence $\left( \mathbf{M}^{n}:n\in \mathbb{N}\right) $
converges to $\mathbf{K}=\left( K,...,K\right) $. Since $F$ is continuous on 
$\Delta \left( \left( 0,\infty \right) ^{p}\right) $, we hence get for all $%
\mathbf{x}\in \left( 0,\infty \right) ^{p}$, \ 
\begin{eqnarray*}
F\left( \mathbf{x}\right) &=&\lim_{n\rightarrow \infty }F\left( \mathbf{M}%
^{n}\left( \mathbf{x}\right) \right) =F\left( \lim_{n\rightarrow \infty }%
\mathbf{M}^{n}\left( \mathbf{x}\right) \right) =F\left( \mathbf{K}\left( 
\mathbf{x}\right) \right) \\
&=&F\left( K\left( \mathbf{x}\right) ,...,K\left( \mathbf{x}\right) \right) .
\end{eqnarray*}%
Setting%
\begin{equation*}
\varphi \left( t\right) :=F\left( t,...,t\right) \text{, \ \ \ \ \ }t\in
\left( 0,\infty \right) \,\text{,}
\end{equation*}%
we hence get $F\left( \mathbf{x}\right) =\varphi \left( K\left( \mathbf{x}%
\right) \right) $ for all $\mathbf{x}\in \left( 0,\infty \right) ^{p}$.

To prove the converse implication, take an arbitrary function $\varphi
:I\rightarrow \mathbb{R}$ and put $F:=\varphi \circ K$. Then, for all $%
\mathbf{x}\in \left( 0,\infty \right) ^{p},$ making use of the $K$%
-invariance with respect to $\mathbf{M}$, we have 
\begin{equation*}
F\left( \mathbf{M}\left( \mathbf{x}\right) \right) =\left( \varphi \circ
K\right) \left( \mathbf{M}\left( \mathbf{x}\right) \right) =\varphi \left(
K\left( \mathbf{M}\left( \mathbf{x}\right) \right) \right) =\varphi \left(
K\left( \mathbf{x}\right) \right) =F\left( \mathbf{x}\right) ,
\end{equation*}%
which completes the proof of (iii).

(iv) we omit similar argument.
\end{proof}

Part (ii) of this result gives rise to the following \ 

%
%
%
%
%
%
%

\subsection{General complementary process}

Once we have a mean-type $\mathbf{M} \colon I^p \to I^p$ and a continuous
and monotone mean $K \colon I^p \to I$ which is $\mathbf{M}$-invariant let $%
\mathfrak{K}^+\left( \mathbf{M},K\right)$ be the smallest family of
mean-type mappings containing $\mathbf{M}$ which is closed under $K$%
-complementary averaging.

More precisely for every $\mathbf{X} \in \mathfrak{K}^+\left( \mathbf{M}%
,K\right)$ and nonempty subset $S \subseteq \mathbb{N}_p$ we have $\mathbf{K}%
_S(\mathbf{X}) \in \mathfrak{K}^+\left( \mathbf{M},K\right)$, too. We also
define a family of means 
\begin{equation*}
\mathfrak{K}_0\left( \mathbf{M},K\right):=\big\{ \lbrack \mathbf{X}]_i
\colon \mathbf{X}\in \mathfrak{K}^+\left( \mathbf{M},K\right)\text{ and }i
\in \mathbb{N}_p\big\}
\end{equation*}

Obviously using notions from Theorem~\ref{thm:KS} and Corollary~\ref{cor:KS}
we have 
\begin{equation*}
\mathfrak{K}^{+}\left( \mathbf{M},K\right) \supseteq \mathfrak{K}^{+}\left( 
\mathbf{K}_{S}(\mathbf{M}),K\right) \supseteq \mathfrak{K}^{+}\left( \mathbf{%
K}_{S}^{\ast }(\mathbf{M}),K\right)
\end{equation*}%
Furthermore we have the following

\begin{proposition}
\label{prop:1} Given an interval $I \subset \mathbb{R}$, $p \in \mathbb{N}$,
and a mean-type mapping $\mathbf{M}:=(M_1,\dots,M_p) \colon I^p \to I^p$
which is invariant with respect to some continuous and monotone mean $K
\colon I^p \to I$. Then 
\begin{equation*}
\min(M_1,\dots,M_p) \le X \le \max(M_1,\dots,M_p) \qquad \text{ for all }X
\in \mathfrak{K}_0\left( \mathbf{M},K\right).
\end{equation*}
\end{proposition}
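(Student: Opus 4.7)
The plan is to prove the bound by induction on the generation in which each mean-type mapping appears inside the inductively built family $\mathfrak{K}^{+}(\mathbf{M},K)$. More precisely, I would set $\mathfrak{K}^{(0)} := \{\mathbf{M}\}$ and, for $n \ge 0$,
\begin{equation*}
\mathfrak{K}^{(n+1)} := \mathfrak{K}^{(n)} \cup \big\{ \mathbf{K}_S(\mathbf{X}) : \mathbf{X} \in \mathfrak{K}^{(n)},\ \emptyset \ne S \subseteq \mathbb{N}_p \big\},
\end{equation*}
observe that $\bigcup_{n \ge 0} \mathfrak{K}^{(n)}$ is closed under $K$-complementary averaging and contains $\mathbf{M}$, and that, by minimality, it must coincide with $\mathfrak{K}^{+}(\mathbf{M},K)$. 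Accordingly it suffices to show, by induction on $n$, that every coordinate of every $\mathbf{X} \in \mathfrak{K}^{(n)}$ lies between $\min(M_1,\dots,M_p)$ and $\max(M_1,\dots,M_p)$.

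The base case $n=0$ is immediate since the coordinates of $\mathbf{M}$ are the means $M_1,\dots,M_p$ themselves. For the induction step, suppose the claim holds for every mean-type mapping in $\mathfrak{K}^{(n)}$, take $\mathbf{X} = (X_1,\dots,X_p) \in \mathfrak{K}^{(n)}$ and a nonempty $S \subseteq \mathbb{N}_p$, and consider $\mathbf{K}_S(\mathbf{X})$. I need two ingredients: first, that $K$ is $\mathbf{X}$-invariant, and second, that Theorem~\ref{thm:KS} applied to $\mathbf{X}$ delivers the required bound on $K_S(\mathbf{X})$. The invariance follows by an auxiliary induction: every mean-type mapping in $\mathfrak{K}^{+}(\mathbf{M},K)$ is $K$-invariant, because $\mathbf{M}$ is, and if $\mathbf{X}$ is $K$-invariant then Theorem~\ref{thm:KS} guarantees that $\mathbf{K}_S(\mathbf{X})$ is $K$-invariant too. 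Hence Theorem~\ref{thm:KS} applies to $\mathbf{X}$ and yields
\begin{equation*}
\min(X_i : i \in S) \le K_S(\mathbf{X}) \le \max(X_i : i \in S).
\end{equation*}

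Now the coordinates of $\mathbf{K}_S(\mathbf{X})$ are either $K_S(\mathbf{X})$ (for $i \in S$) or $X_i$ (for $i \notin S$). Each $X_i$ satisfies $\min \mathbf{M} \le X_i \le \max \mathbf{M}$ by the inductive hypothesis, and the bound just displayed together with that same hypothesis shows $\min \mathbf{M} \le K_S(\mathbf{X}) \le \max \mathbf{M}$. This completes the induction step and therefore the proof. The only mildly subtle point, and the one I would be most careful with, is the need to carry along the auxiliary invariance statement ``every element of $\mathfrak{K}^{+}(\mathbf{M},K)$ is $K$-invariant'' as part of the induction hypothesis, since Theorem~\ref{thm:KS} is only available once $K$-invariance of $\mathbf{X}$ has been established; otherwise the step to apply the theorem to $\mathbf{X}$ rather than to $\mathbf{M}$ itself is not licensed.
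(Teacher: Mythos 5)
Your proposal is correct and follows exactly the route the paper intends: the paper dismisses the proof as "obvious by induction in view of Theorem~\ref{thm:KS} (moreover part)," and your argument is precisely that induction, spelled out generation by generation. Your added care in carrying the auxiliary $K$-invariance statement through the induction (so that Theorem~\ref{thm:KS} is actually applicable to each $\mathbf{X}$) is a point the paper leaves implicit, and it is handled correctly.
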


Its inductive proof is obvious in view of Theorem~\ref{thm:KS} (moreover
part).

Now we prove that complementary means preserve symmetry. 
%
%

\begin{proposition}
If a continuous and monotone mean $K\colon I^{p}\rightarrow I$ is invariant
with respect to a mean-type mapping $\mathbf{M}:=(M_{1},\dots ,M_{p})\colon
I^{p}\rightarrow I^{p}$ such that all $M_{i}$-s are symmetric, then $K$ and
all means in $\mathfrak{K}_{0}\left( \mathbf{M},K\right) $ are symmetric.
\end{proposition}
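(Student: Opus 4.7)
The plan is to first establish symmetry of $K$ itself, then symmetry of $K_S(\mathbf{M})$ for any $S$, and then use these two ingredients in an induction along the recursive definition of $\mathfrak{K}^+(\mathbf{M},K)$ to obtain symmetry of every mean in $\mathfrak{K}_0(\mathbf{M},K)$.

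For the symmetry of $K$, I would argue directly from the invariance identity $K\circ\mathbf{M}=K$. For a permutation $\sigma$ of $\mathbb{N}_p$ and $\mathbf{x}\in I^p$, write $\sigma\cdot\mathbf{x}:=(x_{\sigma(1)},\dots,x_{\sigma(p)})$. Since every $M_i$ is symmetric, $M_i(\sigma\cdot\mathbf{x})=M_i(\mathbf{x})$ for each $i$, so $\mathbf{M}(\sigma\cdot\mathbf{x})=\mathbf{M}(\mathbf{x})$. Applying $\mathbf{M}$-invariance both at $\mathbf{x}$ and at $\sigma\cdot\mathbf{x}$ gives
\begin{equation*}
K(\sigma\cdot\mathbf{x})=K(\mathbf{M}(\sigma\cdot\mathbf{x}))=K(\mathbf{M}(\mathbf{x}))=K(\mathbf{x}).
\end{equation*}

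For $K_S(\mathbf{M})$, I would reuse the construction in the proof of Theorem~\ref{thm:KS}: $K_S(\mathbf{M})(\mathbf{x})$ is the unique $\alpha\in I$ such that $K\circ T(\alpha)=K(\mathbf{x})$, where $T(\alpha)$ has entry $M_i(\mathbf{x})$ for $i\notin S$ and entry $\alpha$ for $i\in S$. When $\mathbf{x}$ is replaced by $\sigma\cdot\mathbf{x}$, the function $T$ does not change (by symmetry of the $M_i$'s) and the right-hand side $K(\sigma\cdot\mathbf{x})=K(\mathbf{x})$ also does not change (by symmetry of $K$ just proved), so the unique $\alpha$ is the same; hence $K_S(\mathbf{M})(\sigma\cdot\mathbf{x})=K_S(\mathbf{M})(\mathbf{x})$.

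Finally I would induct on the construction of $\mathfrak{K}^+(\mathbf{M},K)$: the base case $\mathbf{M}\in\mathfrak{K}^+(\mathbf{M},K)$ is fulfilled by hypothesis, and given $\mathbf{X}=(X_1,\dots,X_p)\in\mathfrak{K}^+(\mathbf{M},K)$ with all $X_i$ symmetric, the coordinates of $\mathbf{K}_S(\mathbf{X})$ are either some $X_i$ (symmetric by the inductive hypothesis) or $K_S(\mathbf{X})$, which is symmetric by the argument of the preceding paragraph applied to $\mathbf{X}$ in place of $\mathbf{M}$. The only point that requires care in this induction is that the argument for $K_S(\mathbf{X})$ needs (a) symmetry of $K$, (b) symmetry of each $X_i$, and (c) $\mathbf{X}$-invariance of $K$; the last follows because every $\mathbf{X}\in\mathfrak{K}^+(\mathbf{M},K)$ keeps $K$ invariant by Theorem~\ref{thm:KS}. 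This is the only subtle bookkeeping step; no analytic obstacle arises.
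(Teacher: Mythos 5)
Your proof is correct and follows essentially the same route as the paper's: symmetry of $K$ from invariance plus symmetry of the $M_i$, symmetry of the complementary mean from the uniqueness of the solution of the defining equation (the paper phrases this step as a contradiction via the strict monotonicity of $K$, which is the same mechanism underlying that uniqueness), and an induction over the generation of $\mathfrak{K}^+(\mathbf{M},K)$. Your explicit bookkeeping of the inductive hypotheses (symmetry of $K$, symmetry of each coordinate of $\mathbf{X}$, and $\mathbf{X}$-invariance of $K$ for every $\mathbf{X}\in\mathfrak{K}^+(\mathbf{M},K)$) is, if anything, more careful than the paper, which only remarks that symmetry is preserved by a single complement.
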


\begin{proof}
Fix a nonconstant vector $x\in I^{p}$ and a permutation $\sigma $ of $%
\mathbb{N}_{p}$. First observe that $K(x)=K\circ \mathbf{M}(x)=K\circ 
\mathbf{M}(x\circ \sigma )=K(x\circ \sigma )$, which implies that $K$ is
symmetric.

As the family $\mathfrak{K}_0\left( \mathbf{M},K\right)$ is generating by
complementing, we need to show that symmetry is preserved by a single
complement. Therefore it is sufficient to show that the mean $K_S(\mathbf{M}%
) $ defined in Theorem~\ref{thm:KS} is symmetric. However, using the notions
therein, we have 
\begin{equation*}
K \circ (\mathbf{K}_S(\mathbf{M}))(x)=K(x)=K(x\circ\sigma)=K \circ (\mathbf{K%
}_S(\mathbf{M}))(x\circ\sigma).
\end{equation*}
By monotonicity of $K$, if $K_S(\mathbf{M})(x)<K_S(\mathbf{M})(x\circ\sigma)$
we would have 
\begin{equation*}
K \circ (\mathbf{K}_S(\mathbf{M}))(x)=K(x)<K \circ (\mathbf{K}_S(\mathbf{M}%
))(x\circ\sigma)
\end{equation*}
contradicting the above equality. Similarly we exclude the case $K_S(\mathbf{%
M})(x)>K_S(\mathbf{M})(x\circ\sigma)$. Therefore $K_S(\mathbf{M})(x)=K_S(%
\mathbf{M})(x\circ\sigma)$ which, as $x$ and $\sigma$ were taken
arbitrarily, yields the symmetry of $K_S(\mathbf{M})$.
\end{proof}


\section{\label{sec:Beta} An applications to Beta-type means}

Following \cite{HimMat2018}, for a given $k\in \mathbb{N}$ we define a $p$%
-variable Beta-type mean $\mathcal{B}_{p}\colon \mathbb{R}%
_{+}^{k}\rightarrow \mathbb{R}_{+}$ by 
\begin{equation*}
\mathcal{B}_{p}(x_{1},\dots ,x_{p}):=\bigg(\frac{px_{1}\cdots x_{p}}{%
x_{1}+\dots +x_{p}}\bigg)^{\frac{1}{p-1}}.
\end{equation*}%
This is a particular case of so-called biplanar-combinatoric means (Media
biplana combinatoria) defined in Gini \cite{G} and Gini--Zappa \cite{GZ}. 


In order to formulate the next results, we adapt the notation that $A$, $G$
and $H$ are arithmetic, geometric and harmonic means of suitable dimension,
respectively.

\bigskip In \cite{JM20Coll}, the invariance $G\circ \left( A,H\right) =G,$
equivalent to the Pythagorean proportion, has been extended for arbitrary $%
p\geq 3$. In case $p=3$ it takes the form $G\circ \left( A,F,H\right) =G$,
where 
\begin{equation*}
F\left( x_{1},x_{2},x_{3}\right) =:\frac{x_{2}x_{3}+x_{3}x_{1}+x_{1}x_{2}}{%
x_{1}+x_{2}+x_{3}},\text{ \ \ \ \ }x_{1},x_{2},x_{3}>0,
\end{equation*}%
and $H\leq F\leq A$ $.$ Hence, making use of Corollary 1 with $p=3,$ $K=G$, $%
S=\left\{ 1,2\right\} $ we obtain the following

\begin{remark}
For all $x_{1},x_{2},x_{3},$ the following inequality holds%
\begin{equation*}
H\left( x_{1},x_{2},x_{3}\right) \leq \mathcal{B}_{3}(x_{1},x_{2},x_{p})\leq
A\left( x_{1},x_{2},x_{3}\right) ,
\end{equation*}%
and the inequalities are sharp for nonconstant vectors $x=\left(
x_{1},x_{2},x_{3}\right) \in \left( 0,\infty \right) ^{3}$.
\end{remark}

Passing to the main part of this section, first observe the following
easy-to-see lemma.

\begin{lemma}
\label{lem:1} Let $p\in \mathbb{N}$, $p\geq 2$. Then there exists exactly
one mean $M \colon I^p \to I$ such that $G\circ\big(A,\underbrace{M,\dots,M}%
_{(p-1)\text{ times}}\big)=G$. Furthermore $M=\mathcal{B}_{p}$. 
\end{lemma}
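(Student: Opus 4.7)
My plan is to solve the invariance equation $G\circ(A,M,\dots,M)=G$ directly. Since $G(y_1,\dots,y_p)=(y_1\cdots y_p)^{1/p}$, raising both sides to the $p$-th power turns the equation into
\[ A(\mathbf{x})\,M(\mathbf{x})^{p-1}=x_1x_2\cdots x_p,\qquad \mathbf{x}\in I^p. \]
As $A(\mathbf{x})>0$ (we work on $I\subseteq(0,\infty)$ so that $G$ is defined), this uniquely determines $M$ pointwise, forcing
\[ M(\mathbf{x})=\left(\frac{x_1\cdots x_p}{A(\mathbf{x})}\right)^{1/(p-1)}=\left(\frac{p\,x_1\cdots x_p}{x_1+\cdots+x_p}\right)^{1/(p-1)}=\mathcal{B}_p(\mathbf{x}). \]
A direct substitution confirms that $\mathcal{B}_p$ satisfies the original equation, so existence, uniqueness, and the identification $M=\mathcal{B}_p$ all follow at once from this single calculation.

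What remains is to verify that $\mathcal{B}_p$ is actually a mean, i.e.\ $\min(\mathbf{x})\le\mathcal{B}_p(\mathbf{x})\le\max(\mathbf{x})$. For the upper bound, AM--GM gives $G\le A$, whence
\[ \mathcal{B}_p^{p-1}=\frac{G^p}{A}\le\frac{G^p}{G}=G^{p-1}, \]
so $\mathcal{B}_p\le G\le\max(\mathbf{x})$. For the lower bound, set $m:=\min(\mathbf{x})$ and, after reordering, assume $x_1=m$; bounding $p-2$ of the remaining factors by $m$ and retaining one maximal factor gives $x_2\cdots x_p\ge m^{p-2}\max(\mathbf{x})\ge m^{p-2}A(\mathbf{x})$, hence $x_1\cdots x_p\ge m^{p-1}A(\mathbf{x})$, so $\mathcal{B}_p^{p-1}=G^p/A\ge m^{p-1}$ and $\mathcal{B}_p\ge m$.

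As an alternative, one could bypass the meanness check entirely by invoking Theorem~\ref{thm:KS}: the paper \cite{JM20Coll} supplies, for every $p\ge 2$, a $G$-invariant mean-type mapping of the form $(A,M_2,\dots,M_p)$, and applying Theorem~\ref{thm:KS} with $K=G$ and $S=\{2,\dots,p\}$ produces the desired unique mean directly, together with automatic bounds by $\min$ and $\max$ of $(M_i)_{i\in S}$; the explicit computation above then identifies this mean with $\mathcal{B}_p$. I expect the only mild obstacle to be the lower-bound inequality in the direct route, which is just a careful grouping of factors.
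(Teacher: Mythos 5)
Your argument is correct, but your main line of attack is genuinely different from the paper's. The paper omits the proof entirely, indicating only that the lemma is ``a straightforward implication of Theorem~\ref{thm:KS}'': one takes the $G$-invariant mean-type mapping $(A,M_2,\dots,M_p)$ supplied by \cite{JM20Coll}, applies Theorem~\ref{thm:KS} with $K=G$ and $S=\{2,\dots,p\}$ to obtain existence, uniqueness, and the bounds $\min(M_i\colon i\in S)\le K_S\le\max(M_i\colon i\in S)$ for free, and then identifies the resulting mean with $\mathcal{B}_p$ by computation. Your primary route instead solves the invariance equation directly: raising to the $p$-th power gives $A(\mathbf{x})\,M(\mathbf{x})^{p-1}=x_1\cdots x_p$, which pins down $M=\mathcal{B}_p$ pointwise and settles uniqueness at once, after which you verify by hand that $\mathcal{B}_p$ is a mean. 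This is more elementary and self-contained---it needs neither Theorem~\ref{thm:KS} nor the external invariance identity from \cite{JM20Coll}---at the cost of the explicit meanness check, which you carry out correctly: the upper bound via $\mathcal{B}_p^{p-1}=G^p/A\le G^{p-1}$, and the lower bound by retaining one maximal factor and bounding the other $p-2$ factors by $\min(\mathbf{x})$; both computations hold, including the degenerate case $p=2$. The alternative you sketch at the end coincides with the paper's intended argument, and you correctly note its dependence on the external result. The only point worth making explicit is that the lemma implicitly requires $I\subseteq(0,\infty)$ for $G$ to be defined, which you rightly assume.
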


Its simple prove which is a straightforward implication of Theorem~\ref%
{thm:KS} is omitted. 
%
Based on this lemma it is natural to define a mean-type mapping $\mathbf{B}%
\colon I^p \to I^p$ by $[\mathbf{B}]_1:=A$ and $[\mathbf{B}]_i:=\mathcal{B}%
_p $ for all $i \in\{2,\dots,p\}$. Then we have $G \circ\mathbf{B}=G$, which
implies that the geometric mean is the unique $\mathbf{B}$-invariant mean.

We are now going to establish the set $\mathfrak{K}^{+}\left( \mathbf{B}%
,G\right) $. It is quite easy to observe that all means in $\mathfrak{K}%
_{0}\left( \mathbf{B},G\right) $ are of the form $\mathcal{H}_{p,\alpha }\colon
I^{p}\rightarrow I$ ($\alpha \in \mathbb{R}$) given by 
\begin{equation*}
\mathcal{H}_{p,\alpha }(x_{1},\dots ,x_{p}):=\big(x_{1}\cdots x_{p}\big)^{\frac{%
1-\alpha }{p}}\bigg(\frac{x_{1}+\dots +x_{p}}{p}\bigg)^{\alpha }
\end{equation*}%
including $\mathcal{B}_{p}=\mathcal{H}_{p,-\frac{1}{p-1}}$.
In the next lemma we show some elementary
properties of the family $(\mathcal{H}_{p,\alpha })$.

\begin{lemma}
Let $p \in \mathbb{N}$. Then

\begin{enumerate}[(i)]
\item \label{H.0} $\mathcal{H}_{p,\alpha}$ is reflexive for all $\alpha \in \mathbb{R}$%
, that is $\mathcal{H}_{p,\alpha}(x,\dots,x)=x$ for all $x \in \mathbb{R}_+$,\newline

\item \label{H.05} $\mathcal{H}_{p,\alpha}$ is continuous for all $\alpha \in \mathbb{R%
}$ (as a $p$-variable function), \newline

\item \label{H.1} $\mathcal{H}_{p,\alpha}$ is a strict mean for all $\alpha \in
[-\tfrac1{p-1},1]$,\newline

\item \label{H.2} $\mathcal{H}_{p,\alpha}$ is a symmetric function for all $\alpha \in 
\mathbb{R}$, that is $\mathcal{H}_{p,\alpha}(x \circ \sigma)=\mathcal{H}_{p,\alpha}(x)$ for all $%
x \in \mathbb{R}_+^p$ and a permutation $\sigma$ of $\mathbb{N}_p$,\newline

\item \label{H.3} $\mathcal{H}_{p,1}$ and $\mathcal{H}_{p,0}$ are $p$-variable arithmetic and
geometric means, respectively,\newline

\item \label{H.4} $\mathcal{H}_{p,\alpha}$ is increasing with respect to $\alpha$,
that is $\mathcal{H}_{p,\alpha}(x) < \mathcal{H}_{p,\beta}(x)$ for every nonconstant vector $x
\in \mathbb{R}_+^p$ and $\alpha,\beta \in\mathbb{R}$ with $\alpha<\beta$.
\end{enumerate}
\end{lemma}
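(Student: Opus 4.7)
The plan is to anchor everything on the logarithmic identity
\[
\log \mathcal{H}_{p,\alpha}(x) = (1-\alpha)\log G(x) + \alpha \log A(x),
\]
which exposes the one-parameter structure of the family. From this the routine items fall out immediately: I will read off (\ref{H.0}) by substituting $x_1=\cdots=x_p=x$; obtain (\ref{H.05}) by noting that $\mathcal{H}_{p,\alpha}$ is a composition of continuous elementary operations on $\mathbb{R}_+^p$; verify (\ref{H.2}) since the expression depends only on the symmetric quantities $x_1\cdots x_p$ and $x_1+\cdots+x_p$; and check (\ref{H.3}) by setting $\alpha=0$ and $\alpha=1$.

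For (\ref{H.4}) I would differentiate the identity above in $\alpha$, obtaining $\log(A(x)/G(x))$, which is strictly positive on nonconstant $x$ by AM--GM. Hence $\alpha\mapsto \mathcal{H}_{p,\alpha}(x)$ is strictly increasing on all of $\mathbb{R}$ for every nonconstant $x$, which is exactly (\ref{H.4}).

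The main step is (\ref{H.1}). I would use (\ref{H.4}) to sandwich $\mathcal{H}_{p,\alpha}$ between its endpoint values on $[-\tfrac{1}{p-1},1]$: for every nonconstant $x$,
\[
\mathcal{B}_p(x)=\mathcal{H}_{p,-1/(p-1)}(x)\le \mathcal{H}_{p,\alpha}(x)\le \mathcal{H}_{p,1}(x)=A(x).
\]
The upper bound is strict because $A$ is a strict mean, giving $\mathcal{H}_{p,\alpha}(x)<\max x$. For the lower bound I would cite Lemma~\ref{lem:1}, which guarantees that $\mathcal{B}_p$ is a mean and hence $\mathcal{B}_p(x)\ge \min x$. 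Strictness of this bound will follow from a short direct calculation showing that the partial derivatives of $\tfrac{p\,x_1\cdots x_p}{x_1+\cdots+x_p}$ are manifestly positive on $\mathbb{R}_+^p$, so $\mathcal{B}_p$ is strictly monotone, and hence strict by the remark in the preliminaries.

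The main obstacle is precisely the strict lower bound at the boundary $\alpha=-\tfrac{1}{p-1}$: there the logarithmic identity is no longer a convex combination of $\log G$ and $\log A$, so the textbook AM--GM comparison that handles $\alpha\in[0,1]$ breaks down. This is exactly what forces the detour through Lemma~\ref{lem:1} (and thereby through Theorem~\ref{thm:KS}) instead of a purely arithmetic manipulation of the formula for $\mathcal{H}_{p,\alpha}$.
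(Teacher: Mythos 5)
Your proposal is correct, and its skeleton coincides with the paper's: the routine items by inspection, (\ref{H.4}) from the positivity of $\log(A/G)$ on nonconstant vectors (the paper writes this multiplicatively as $(a/g)^{\beta-\alpha}>1$), and (\ref{H.1}) by using (\ref{H.4}) to reduce to the two endpoints $\alpha=1$ and $\alpha=-\tfrac1{p-1}$. The one place you genuinely diverge is the strict lower bound at $\alpha=-\tfrac1{p-1}$. You route this through Lemma~\ref{lem:1} (to know $\mathcal{B}_p$ is a mean, hence $\ge\min x$) plus a quotient-rule computation showing $\tfrac{p\,x_1\cdots x_p}{x_1+\cdots+x_p}$ is strictly increasing in each variable, and then invoke the preliminary remark that strictly monotone means are strict; this works, and there is no circularity since Lemma~\ref{lem:1} precedes this lemma and rests only on Theorem~\ref{thm:KS}. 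The paper instead settles the endpoint in two self-contained lines: with $x_1\le\dots\le x_p$ nonconstant and $a$ the arithmetic mean, $\mathcal{H}_{p,-\frac1{p-1}}(x)=\bigl(x_1\cdots x_p/a\bigr)^{\frac1{p-1}}>\bigl(x_1\cdots x_p/x_p\bigr)^{\frac1{p-1}}=\bigl(x_1\cdots x_{p-1}\bigr)^{\frac1{p-1}}\ge x_1$, using only $a<x_p$. The direct computation is shorter and keeps the lemma independent of Theorem~\ref{thm:KS} and of the external invariance identity underlying Lemma~\ref{lem:1}; your version buys nothing extra here, though your closing observation---that the convex-combination/AM--GM argument only covers $\alpha\in[0,1]$ and the boundary $\alpha=-\tfrac1{p-1}$ is where the real work lies---is accurate and matches where the paper, too, spends its effort.
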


\begin{proof}
By the definition of $\mathcal{H}_{p,\alpha}$ we can easily verify that \eqref{H.0}, %
\eqref{H.05}, \eqref{H.2} and \eqref{H.3} holds.

From now on fix a nonconstant vector $x=(x_1,\dots,x_p) \in \mathbb{R}_+^p$.
By \eqref{H.2} we may assume without loss of generality that $x_1\le\dots\le
x_p$. Denote briefly 
\begin{equation*}
g:=\sqrt[p]{x_1\cdots x_p} \qquad \text{ and }\qquad a:=\frac{x_1+\dots+x_p}%
p.
\end{equation*}
By Cauchy inequality we have $x_1<g<a<x_p$. Moreover by the definition $%
\mathcal{H}_{p,\alpha}(x)=a^\alpha g^{1-\alpha}$. Thus for all $\alpha<\beta$ we have 
\begin{equation*}
\mathcal{H}_{p,\beta}(x) =a^\beta g^{1-\beta} =a^\alpha g^{1-\alpha} \big(\tfrac ag %
\big)^{\beta-\alpha} > a^\alpha g^{1-\alpha} =\mathcal{H}_{p,\alpha}(x),
\end{equation*}
which completes the proof of \eqref{H.4}. The only remaining part to be
proved is \eqref{H.1}. However, applying \eqref{H.4}, it is sufficient to
show that 
\begin{equation*}
\mathcal{H}_{p,1}(x) < \max(x)=x_p\qquad\text{ and }\qquad \mathcal{H}_{p,-\frac1{p-1}}(x) >
\min(x)=x_1.
\end{equation*}
By \eqref{H.3} we immidiatelly obtain $\mathcal{H}_{p,1}(x)=a<x_p$. 
For the second part observe that 
\begin{align*}
\mathcal{H}_{p,-\frac1{p-1}}(x)=\sqrt[p-1]{\frac{g ^p}a}=\sqrt[p-1]{\frac{x_1\dots x_p}%
a}>\sqrt[p-1]{\frac{x_1\dots x_p}{x_p}}=\sqrt[p-1]{x_1\dots x_{p-1}}\ge x_1,
\end{align*}
which completes the proof.
\end{proof}

Now we generalize Lemma~\ref{lem:1} to the following form

\begin{lemma}
\label{lem:2} Let $p \in \mathbb{N}$, $p\ge2$ and $\alpha \in \mathbb{R}^p$.
Then $G \circ (\mathcal{H}_{p,\alpha_1},\dots,\mathcal{H}_{p,\alpha_p})=G$, if and only if $%
\alpha_1+\dots+\alpha_p=0$.
\end{lemma}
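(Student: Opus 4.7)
The plan is to reduce everything to a direct computation using the explicit factorization $\mathcal{H}_{p,\alpha}(x)=g^{1-\alpha}a^{\alpha}$, where $g=\sqrt[p]{x_1\cdots x_p}$ and $a=(x_1+\cdots+x_p)/p$, which was already singled out in the proof of the preceding lemma. Since all the means $\mathcal{H}_{p,\alpha_i}$ are evaluated at the same vector $x$, the quantities $g$ and $a$ do not depend on $i$, so the composition collapses cleanly.

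First, I would substitute into $G(\mathcal{H}_{p,\alpha_1}(x),\dots,\mathcal{H}_{p,\alpha_p}(x))$ and pull out the exponents. Using $G(y_1,\dots,y_p)=(y_1\cdots y_p)^{1/p}$, this yields
\begin{equation*}
G\bigl(\mathcal{H}_{p,\alpha_1}(x),\dots,\mathcal{H}_{p,\alpha_p}(x)\bigr)
=\prod_{i=1}^{p}\bigl(g^{1-\alpha_i}a^{\alpha_i}\bigr)^{1/p}
=g^{1-s/p}a^{s/p}=g\cdot\bigl(a/g\bigr)^{s/p},
\end{equation*}
where $s:=\alpha_1+\cdots+\alpha_p$. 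Since $G(x)=g$, the invariance identity $G\circ(\mathcal{H}_{p,\alpha_1},\dots,\mathcal{H}_{p,\alpha_p})=G$ is equivalent to $(a/g)^{s/p}=1$ for all $x\in\mathbb{R}_+^p$.

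Next I would use the fact, already established in the proof of the previous lemma (Cauchy's inequality), that $a/g>1$ whenever $x$ is nonconstant. Picking any such $x$, the equation $(a/g)^{s/p}=1$ forces $s/p=0$, hence $s=0$. Conversely, if $s=0$, the displayed identity reads $g\cdot(a/g)^{0}=g=G(x)$, so the invariance holds trivially for every $x$ (constant or not).

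There is no real obstacle here; the only small point to handle with care is to note that one cannot conclude $s=0$ from a single constant vector (where $a=g$), so the argument must explicitly invoke a nonconstant $x$ to extract the exponent. Once that is said, the equivalence is immediate from the explicit form of $\mathcal{H}_{p,\alpha}$.
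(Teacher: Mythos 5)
Your proof is correct and follows essentially the same route as the paper: your computation $G\circ(\mathcal{H}_{p,\alpha_1},\dots,\mathcal{H}_{p,\alpha_p})=g^{1-s/p}a^{s/p}=\mathcal{H}_{p,s/p}$ is exactly the identity the paper invokes, and your use of the strict AM--GM inequality on a nonconstant vector is the injectivity of $\alpha\mapsto\mathcal{H}_{p,\alpha}$ that the paper relies on implicitly. Your explicit remark that a nonconstant $x$ is needed to extract $s=0$ is a welcome clarification of the paper's ``obvious'' step.
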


Its proof is obvious in view of the identity $G\circ (\mathcal{H}_{p,\alpha
_{1}},\dots ,\mathcal{H}_{p,\alpha _{p}})=\mathcal{H}_{p,\frac{1}{p}(\alpha _{1}+\dots +\alpha
_{p})}$. Having this proved, let us show the next important result.

\begin{theorem}
Let $p \ge 2$. Then 
\begin{align*}
&\mathfrak{K}^+\big((A,\underbrace{\mathcal{B}_p,\dots,\mathcal{B}_p}_{(p-1)%
\text{ times}}),G\big) \\
&\qquad\subseteq \big\{ (\mathcal{H}_{p,\alpha_1},\dots,\mathcal{H}_{p,\alpha_p}) \mid
\alpha_1,\dots,\alpha_p \in \mathbb{Q} \cap [-\tfrac1{p-1},1] \text{ and }%
\alpha_1+\dots+\alpha_p=0 \big\}.
\end{align*}
In particular 
\begin{equation*}
\mathfrak{K}_0=\big((A,\mathcal{B}_p,\dots,\mathcal{B}_p),G\big) \subseteq %
\big\{ \mathcal{H}_{p,\alpha} \mid \alpha \in \mathbb{Q} \cap [-\tfrac1{p-1},1] \big\}.
\end{equation*}
\end{theorem}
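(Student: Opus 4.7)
The plan is to exploit the minimality of $\mathfrak{K}^{+}(\mathbf{B},G)$: since this family is by definition the smallest one containing $\mathbf{B} := (A, \mathcal{B}_p, \dots, \mathcal{B}_p)$ and closed under $G$-complementary averaging, it suffices to produce any superset with these two closure properties. I would take
$$\mathcal{S} := \big\{ (\mathcal{H}_{p,\alpha_1}, \dots, \mathcal{H}_{p,\alpha_p}) : \alpha_1, \dots, \alpha_p \in \mathbb{Q} \cap [-\tfrac{1}{p-1}, 1],\ \alpha_1 + \dots + \alpha_p = 0 \big\}$$
and check both properties.

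For the base case, \eqref{H.3} of the preceding lemma gives $A = \mathcal{H}_{p,1}$, and $\mathcal{B}_p = \mathcal{H}_{p,-1/(p-1)}$ is immediate from the two definitions. Hence $\mathbf{B}$ has exponent vector $\big(1, -\tfrac{1}{p-1}, \dots, -\tfrac{1}{p-1}\big)$, whose entries are rational, lie in $[-\tfrac{1}{p-1},1]$, and sum to $0$, so $\mathbf{B} \in \mathcal{S}$. For closure, fix $\mathbf{X} = (\mathcal{H}_{p,\alpha_1}, \dots, \mathcal{H}_{p,\alpha_p}) \in \mathcal{S}$ and a nonempty $S \subseteq \mathbb{N}_p$, and set $\beta := \tfrac{1}{|S|} \sum_{i \in S}\alpha_i$. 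As a convex combination of entries from $\mathbb{Q} \cap [-\tfrac{1}{p-1},1]$, this $\beta$ lies in the same set, and so \eqref{H.1} guarantees that $\mathcal{H}_{p,\beta}$ is a strict mean. Define the $p$-tuple $\mathbf{Y}$ to be $\mathcal{H}_{p,\beta}$ at each position in $S$ and $\mathcal{H}_{p,\alpha_i}$ otherwise. Its exponents sum to $|S|\beta + \sum_{i \notin S}\alpha_i = \sum_i \alpha_i = 0$, so Lemma~\ref{lem:2} yields $G \circ \mathbf{Y} = G$. Uniqueness in Theorem~\ref{thm:KS} then forces $K_S(\mathbf{X}) = \mathcal{H}_{p,\beta}$, i.e.\ $\mathbf{K}_S(\mathbf{X}) = \mathbf{Y} \in \mathcal{S}$, as required.

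The main point where care is needed is the requirement that $\mathcal{H}_{p,\beta}$ actually be a mean before Theorem~\ref{thm:KS} can be invoked; the set $\mathbb{Q} \cap [-\tfrac{1}{p-1},1]$ is tailored precisely so as to be closed under the convex combinations produced by the complementary process, which is why the inductive step closes. The \textquotedblleft In particular\textquotedblright\ statement is then immediate, since every element of $\mathfrak{K}_0\big((A, \mathcal{B}_p, \dots, \mathcal{B}_p), G\big)$ is a coordinate of some mean-type mapping in $\mathfrak{K}^+$ and hence is of the form $\mathcal{H}_{p,\alpha}$ for some $\alpha \in \mathbb{Q} \cap [-\tfrac{1}{p-1},1]$.
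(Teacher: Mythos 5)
Your proposal is correct and follows essentially the same route as the paper: show the candidate set contains $\mathbf{B}$ and is closed under $G$-complementary averaging, identify the complement as $\mathcal{H}_{p,\beta}$ with $\beta=\tfrac1{|S|}\sum_{i\in S}\alpha_i$ via Lemma~\ref{lem:2}, and invoke the uniqueness from Theorem~\ref{thm:KS}. Your explicit remark that $\beta\in\mathbb{Q}\cap[-\tfrac1{p-1},1]$ must be checked so that $\mathcal{H}_{p,\beta}$ is genuinely a mean is exactly the closing step of the paper's argument as well.
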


\begin{proof}
First observe that in view of Lemma~\ref{lem:1} $G$ is $\mathbf{B}$%
-invariant, and thus the set $\mathfrak{K}^+(\mathbf{B},G)$ is well-defined.
Now denote briefly 
\begin{equation*}
\Lambda:=\big\{ (\mathcal{H}_{p,\alpha_1},\dots,\mathcal{H}_{p,\alpha_p}) \mid
\alpha_1,\dots,\alpha_p \in \mathbb{Q} \cap [-\tfrac1{p-1},1] \text{ and }%
\alpha_1+\dots+\alpha_p=0 \big\}.
\end{equation*}

Obviously $\mathbf{B} \in \Lambda$, so it is sufficient to prove that $%
\Lambda$ is closed with respect to $G$-complementary averaging. To this end
take an arbitrary vector $(\alpha_1,\dots,\alpha_p)$ real numbers such that $%
\mathbf{H}:=(\mathcal{H}_{p,\alpha_1},\dots,\mathcal{H}_{p,\alpha_p})\in \Lambda$ and a nonempty
subset $S \subset \mathbb{N}_p$.

By Theorem~\ref{thm:KS} there exists exactly one mean $G_S(\mathbf{H})
\colon I^p \to I$ such that $G$ is $\mathbf{G}_S(\mathbf{H})$-invariant,
where $\mathbf{G}_S(\mathbf{H})\colon I^{p}\rightarrow I^{p}$ is given by 
\begin{equation*}
\big[ \mathbf{G}_S(\mathbf{H})\big]_{i}:=%
\begin{cases}
\mathcal{H}_{p,\alpha_i} & \text{ for }i\in \mathbb{N}_{p}\setminus S, \\ 
G_S(\mathbf{H}) & \text{ for }i\in S.%
\end{cases}%
\end{equation*}

On the other hand, in view of Lemma~\ref{lem:2} we obtain that $G$ is
invariant with respect to the mean-type mapping $\mathbf{H}_0 \colon I^p \to
I^p$ given by 
\begin{equation*}
\big[ \mathbf{H}_0\big]_{i}:=%
\begin{cases}
\mathcal{H}_{p,\alpha_i} & \text{ for }i\in \mathbb{N}_p\setminus S, \\ 
\mathcal{H}_{p,\beta} & \text{ for }i\in S,%
\end{cases}
\text{ where } \beta= \frac{1}{|S|} \sum_{i \in S} \alpha_i.
\end{equation*}%
%
%

As $G$ is both $\mathbf{G}_S(\mathbf{H})$-invariant and $\mathbf{H}_0$%
-invariant we obtain $G_S(\mathbf{H})=\mathcal{H}_{p,\beta}$, and consequently $%
\mathbf{G}_S(\mathbf{H})=\mathbf{H}_0$. Observe that $\mathbf{H}_0\in
\Lambda $ is a straightforward implication of the equality 
\begin{equation*}
\sum_{i \in \mathbb{N}_p\setminus S} \alpha_i+ |S| \beta=\sum_{i \in \mathbb{%
N}_p} \alpha_i=0.
\end{equation*}

Now we show that $\beta \in \mathbb{Q} \cap [-\tfrac1{p-1},1]$, which would
complete the proof. But this is simple in view of the definition of $\beta$
and the analogous property $\alpha_i \in \mathbb{Q} \cap [-\tfrac1{p-1},1]$,
which is valid for all $i \in S$. 
\end{proof}

\begin{remark}
Let us just put the reader attention that inclusions in the above theorem
are strict. More precisely, we can prove by simple induction that the
denominator of $\alpha $ in irreducible form has no prime divisors greater
than $p$.
\end{remark}

\bigskip


\begin{thebibliography}{9}
\bibitem{G} C. Gini, \textit{Di una formula comprensiva delle medie}. Metron 
\textbf{13} (1938), no. 2, 3--22.

\bibitem{GZ} C. Gini, G. Zappa, \textit{Sulle proprieta delle medie
potenziate e combinatorie}. Metron \textbf{13} (1938), no. 3, 21--31.

\bibitem{HimMat2018} M. Himmel, J. Matkowski, \textit{Beta-type means}. J.
Difference Equ. Appl. \textbf{24} (2018), no. 5, 753--772.

\bibitem{JM1999} J. Matkowski, Invariant and complementary quasi-arithmetic
means. Aequationes Math. \textbf{57} (1999), no. 1, 87--107.

\bibitem{JM2009} J. Matkowski, \textit{Iterations of the mean-type mappings. 
}Iteration theory (ECIT '08) , Grazer Math. Ber., \textbf{354}, Institut f%
\"{u}r Mathematik, Karl-Franzens-Universit\"{a}t Graz, 2009. 158--179.

\bibitem{JM2013} J. Matkowski, \textit{Iterations of the mean-type mappings
and uniqueness of invariant means}. Ann. Univ. Sci. Budapest. Sect. Comput. 
\textbf{41} (2013), 145--158.

\bibitem{JM20Coll} J. Matkowski, \textit{Means, generalized harmony
proportion and applications}. Colloq. Math. \textbf{160} (2020), no. 1,
109--118.

\bibitem{M-P} J. Matkowski, P. Pasteczka, \textit{Mean-type mappings and
invariance principle}. preprint, arXiv:2005.10623.
\end{thebibliography}
\end{document}